\DeclareMathOperator{\rank}{rank}
\DeclareMathOperator{\HF}{\mathcal{H}}
\DeclareMathOperator{\AHF}{H}
\DeclareMathOperator{\LM}{LM}
\DeclareMathOperator{\LT}{LT}
\DeclareMathOperator{\im}{im}
\title{A generalization of the Bombieri-Pila determinant method}
\author{Oscar Marmon}
\date{}
\begin{document}

\newcommand{\epsi}{\varepsilon}
\newcommand{\xx}{\mathbf{x}}
\newcommand{\xxi}{\boldsymbol{\xi}}
\newcommand{\eeta}{\boldsymbol{\eta}}
\newcommand{\0}{\boldsymbol{0}}
\newcommand{\bb}{\mathbf{b}}
\newcommand{\uu}{\mathbf{u}}
\newcommand{\ee}{\mathbf{e}}
\newcommand{\vv}{\mathbf{v}}
\newcommand{\yy}{\mathbf{y}}
\newcommand{\zz}{\mathbf{z}}
\newcommand{\ww}{\mathbf{w}}
\newcommand{\cc}{\mathbf{c}}
\newcommand{\hh}{\mathbf{h}}
\renewcommand{\gg}{\mathbf{g}}
\newcommand{\bK}{\mathbf{K}}
\newcommand{\bB}{\mathbf{B}}
\newcommand{\ZZ}{\mathbb{Z}}
\newcommand{\Zpol}{\ZZ[X_1,\ldots,X_n]}
\newcommand{\FF}{\mathbb{F}}
\newcommand{\RR}{\mathbb{R}}
\newcommand{\NN}{\mathbb{N}}
\newcommand{\CC}{\mathbb{C}}
\renewcommand{\AA}{\mathbb{A}}
\newcommand{\PP}{\mathbb{P}}
\newcommand{\GG}{\mathbb{G}}
\newcommand{\QQ}{\mathbb{Q}}
\newcommand{\sB}{\mathsf{B}}
\newcommand{\cP}{\mathcal{P}}
\newcommand{\cR}{\mathcal{R}}
\newcommand{\cB}{\mathcal{B}}
\newcommand{\cC}{\mathcal{C}}
\newcommand{\cN}{\mathcal{N}}
\newcommand{\cM}{\mathcal{M}}
\newcommand{\cD}{\mathcal{D}}
\newcommand{\cA}{\mathcal{A}}
\newcommand{\cK}{\mathcal{K}}
\newcommand{\cF}{\mathcal{F}}
\newcommand{\cZ}{\mathcal{Z}}
\newcommand{\cX}{\mathcal{X}}
\newcommand{\cG}{\mathcal{G}}
\newcommand{\ud}{\mathrm{ud}}
\newcommand{\Zar}{\mathrm{Zar}}

\newtheorem{lemma}{Lemma}[section]
\newtheorem*{lemma*}{Lemma}
\newtheorem{prop}{Proposition}[section]
\newtheorem{thm}{Theorem}[section]
\newtheorem{claim}{Claim}[section]
\newtheorem{cor}{Corollary}[section]
\theoremstyle{remark}
\newtheorem*{notation*}{Notation}
\newtheorem*{note*}{Note}
\newtheorem{note}{Note}
\newtheorem*{rem*}{Remark}
\newtheorem{rem}{Remark}[section]
\newtheorem*{acknowledgement*}{Acknowledgement}
\theoremstyle{definition}
\newtheorem*{def*}{Definition}

\begin{abstract}
The so-called determinant method was developed by Bombieri and Pila in 1989 for counting integral points of bounded height on affine plane curves. In this paper we give a generalization of that method to varieties of higher dimension, yielding a proof of Heath-Brown's ``Theorem 14'' by real-analytic considerations alone.   
\end{abstract}

\maketitle


\section{Introduction}

In their paper \cite{Bombieri-Pila}, Bombieri and Pila used a determinant argument to get uniform estimates for the density of integral points on affine plane curves. Very roughly, the basic idea of the method can be described as follows: one constructs a collection of auxiliary polynomials, each vanishing at every integral point on the intersection of the curve with a small square. This is done by forming a determinant of a suitable set of monomials evaluated at the integral points, which vanishes if the square is chosen small enough.

Heath-Brown \cite{Heath-Brown02} developed a non-Archimedean version of the determinant method for counting rational points on projective hypersurfaces in any dimension. Whereas the original method of Bombieri and Pila can be said to use the Archimedean absolute value to estimate the determinant, Heath-Brown instead groups together points that are $p$-adically close (for suitable primes $p$) and uses the $p$-adic absolute value to estimate the determinant.
Salberger \cite{Salberger07} has refined this method further, and it has turned out to be a very valuable tool for counting rational and integral points on algebraic varieties. One important feature of the method is that it produces uniform estimates for varieties of a given degree. 

In this paper, we will revisit the determinant method in the real setting, and try to generalize it to higher dimensional varieties. This has been done to some extent, for example by Pila \cite{Pila04} and Pila and Wilkie \cite{Pila-Wilkie}, but the focus there is on non-algebraic sets, whereas we consider algebraic varieties. We shall recover the main theorem in \cite{Heath-Brown02} using only real-analytic considerations. 


Thus, let $X \subset \AA^n_\RR$ be an irreducible closed subvariety. The main object of interest is, for a given positive real number $B$, the set of integral points of height at most $B$ on $X$,
\[
X(\ZZ,B) = \{\xx = (x_1,\dotsc,x_n) \in X\cap \ZZ^n ; |\xx|\leq B\},
\]
where $|\xx| = \max \{|x_1|,\dotsc,|x_n|\}$. Its cardinality is denoted by $N(X,B)$. 
The following is our main result for affine varieties.

\begin{thm}
\label{thm:affine}
Let $X \subset \AA^n_\RR$ be an irreducible closed subvariety of dimension $m$ and degree $d$. Let $I = I(X) \subset \RR[x_1,\dotsc,x_n]$ be the ideal of $X$. Then, for any $\epsi >0$, there exists a polynomial $g \in \ZZ[x_1,\dotsc,x_n] \setminus I$ of degree 
\[
k \ll_{n,d,\epsi} B^{m d^{-1/m}+\epsi},
\]
all of whose irreducible factors have degree $O_{n,d,\epsi}(1)$, such that $g(\xx) = 0$ for each $\xx \in X(\ZZ,B)$.
\end{thm}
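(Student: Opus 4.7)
The plan is to adapt Bombieri--Pila's method patch-by-patch to a smooth decomposition of $X\cap[-B,B]^n$. Working on the smooth locus $X_{\mathrm{reg}}$ and, after at most a linear change of coordinates on $\AA^n$, I would cover $X_{\mathrm{reg}}\cap[-B,B]^n$ by $\ll_{n,d}\delta^{-m}$ patches $P$, each the graph of an algebraic map $\phi\colon C\to\RR^{n-m}$ over a cube $C\subset[-B,B]^m$ of side $\delta B$. After rescaling the parameters to $[0,1]^m$ and the ambient coordinates by $1/B$, the parametrisation and all of its partial derivatives of order $\leq k$ are bounded by a single constant $C_{n,d,k}$, uniformly in $P$. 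The singular locus $\Sing(X)$ has strictly smaller dimension and is handled separately by Noetherian induction on $\dim X$.

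\textbf{Determinant argument on one patch.}
Fix a patch $P$ and choose monomials $x^{\alpha_1},\dotsc,x^{\alpha_{s'}}$ whose classes form a basis of $\RR[x_1,\dotsc,x_n]_{\leq k}/I(X)_{\leq k}$; by Hilbert-function asymptotics, $s'\sim(d/m!)\,k^m$. If the integer determinant of $M=(x^{\alpha_j}(\xx_i))_{1\leq i,j\leq s'}$ vanishes for every choice of $s'$ points $\xx_i\in X(\ZZ,B)\cap P$, then those integer points satisfy a nontrivial linear relation among the basis monomials, yielding a polynomial $g_P\in\ZZ[x_1,\dotsc,x_n]\setminus I(X)$ of degree $\leq k$ that vanishes on $X(\ZZ,B)\cap P$. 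Since $\det M\in\ZZ$, it is enough to force $|\det M|<1$. To do so I would Taylor-expand each basis monomial in the rescaled parameter $\tau\in[0,1]^m$ and then row-reduce so that, after extracting the column factor $B^{|\alpha_j|}$, the $i$-th row has entries of size $\ll_{n,d,k}\delta^{|\beta_i|}$ for a choice of $s'$ distinct exponents $\beta_i\in\NN^m$. Leibniz then gives
\[
|\det M| \ll_{n,d,k} B^{E}\,\delta^{E'},\qquad E=\sum_j|\alpha_j|,\quad E'=\sum_i|\beta_i|.
\]

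\textbf{Optimisation and assembly.}
A standard computation yields $E\sim\frac{m}{m+1}\,s'k$, while minimising $E'$ over $s'$-element subsets of $\NN^m$ forces the $\beta_i$ to fill the lexicographic band $|\beta|\leq r_0$ with $r_0\sim d^{1/m}k$, producing $E'\sim\frac{m}{m+1}\,s'\,d^{1/m}k$. Hence $E/E'=d^{-1/m}+O(1/k)$, and the condition $|\det M|<1$ is secured by choosing $\delta\asymp B^{-d^{-1/m}-\eta(k)}$ with $\eta(k)\to 0$ as $k\to\infty$. Taking $k=k_0(n,d,\epsi)$ large enough so that $m\eta(k_0)<\epsi$ gives at most $\ll_{n,d,\epsi}\delta^{-m}\ll B^{m\,d^{-1/m}+\epsi}$ patches. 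The product $g=\prod_P g_P$ then has degree $\leq k_0\cdot\#\{P\}\ll B^{m\,d^{-1/m}+\epsi}$, lies outside the prime ideal $I(X)$, and has every irreducible factor of degree $\leq k_0=O_{n,d,\epsi}(1)$, as required.

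\textbf{Main obstacle.}
The pivotal step is the combinatorial-Taylor matching underlying the determinant bound: one must exhibit row operations on $M$ that simultaneously assign the $s'$ smallest Taylor exponents $\{\beta_i\}$ to the rows, so that $E'$ attains its predicted asymptotic $\frac{m}{m+1}\,s'\,d^{1/m}k$. This is the real-analytic counterpart of the Gr\"obner/initial-ideal arguments in Salberger's $p$-adic version of the determinant method, and executing it optimally is exactly what gives the sharp exponent $d^{-1/m}$ rather than a weaker bound. Secondary technical difficulties---uniformity of the derivative bound $C_{n,d,k}$ across the patch decomposition, and treatment of $\Sing(X)$ and of coordinate projections where the graph representation $\phi$ degenerates---are resolved by stratification and induction on $\dim X$, so that all implied constants depend only on $n$, $d$, and $\epsi$.
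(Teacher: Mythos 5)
Your plan has the right large-scale shape (rescale, Taylor-expand the monomials, bound a determinant, force $|\det M|<1$ by shrinking the patch, multiply the resulting forms), and your asymptotics $E\sim\tfrac{m}{m+1}s'k$, $E'\sim\tfrac{m}{m+1}s'd^{1/m}k$, hence $E/E'\to d^{-1/m}$, do reproduce the exponent in the theorem. However, the proposal contains a genuine gap, and it also mislocates the main difficulty.

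The gap is in the covering step. You assert that after partitioning $X_{\mathrm{reg}}\cap[-B,B]^n$ into $\ll_{n,d}\delta^{-m}$ graph patches of side $\delta B$ and rescaling, ``the parametrisation and all of its partial derivatives of order $\leq k$ are bounded by a single constant $C_{n,d,k}$, uniformly in $P$,'' and you relegate the uniformity to a ``secondary technical difficulty'' resolvable by ``stratification and induction on $\dim X$.'' This does not work. The locus where the graph parametrisation $\phi$ and its derivatives blow up is not a proper subvariety of $X$ on which one can induct; it is a shrinking neighbourhood (inside $X$) of the branch locus of the projection. Even for $y^2=x$ on $[-B,B]^2$, a uniform partition into $\delta^{-1}$ intervals does not give uniformly bounded rescaled derivatives near $x=0$; one needs either Bombieri--Pila's excise-and-recurse scheme (which complicates the box count and whose higher-dimensional generalisation is delicate), or a genuine input: the Yomdin--Gromov algebraic parametrisation lemma, which produces $O_{n,d,\nu}(1)$ many $C^\nu$ maps $\phi_i:[-1,1]^m\to [-1,1]^n$ covering $X\cap[-1,1]^n$ with $\|\phi_i\|_\nu\leq 1$. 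This lemma is precisely what the paper invokes, and it is the crux of the real-analytic version of the determinant method in dimension $>1$, not a footnote.

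Conversely, what you call the ``main obstacle'' --- the combinatorial matching of Taylor exponents $\beta_i$ to rows so that $E'$ attains $\tfrac{m}{m+1}s'd^{1/m}k$ --- is handled by a short and standard determinant lemma (the paper's Lemma~\ref{lem:GeneralDeterminant}, a minor variant of Pila's): expand each entry in Taylor series around a fixed base point, split each column into $D_m(\nu)$ pieces by Taylor exponent, and observe that any summand with more than $L_m(i)$ columns at order $i<\nu$ vanishes by linear dependence; the surviving summands carry $r^e$ with $e$ exactly the quantity you want. No Gr\"obner-type initial-ideal manipulations are needed at this step. Where Hilbert-function input genuinely enters is in your ``standard computation'' $E\sim\tfrac{m}{m+1}s'k$: choosing a monomial basis of $\RR[x_1,\dotsc,x_n]_{\leq k}/I_{\leq k}$ minimising $\sum|\alpha_j|$ and proving the $\tfrac{m}{m+1}$ bound requires an argument comparing with the hyperplane section $X\cap\{x_0=0\}$ (in the paper, via the ideal $J=(I^h,x_0)$ and the bound $\sigma_1+\dotsb+\sigma_n\leq\sum_{t\leq s}t\,\HF_J(t)$); this deserves more than a wave, though it is not the main issue.

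In short: replace the uniform-derivative claim by an explicit appeal to Yomdin--Gromov (or carry out the Bombieri--Pila recursion in full), and the rest of your outline is essentially the paper's argument specialised to $\bB=(1,B,\dotsc,B)$, carried out directly in the affine chart rather than via the projective Theorem~\ref{thm:auxiliary}.
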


\begin{rem*}
By the dimension of $X$, we mean its dimension as an irreducible algebraic subset of $\RR^n$ (see \cite{Bochnak-Coste-Roy}). Thus it equals the Krull dimension of the ring $\RR[x_1,\dotsc,x_n]/I(X)$. It need not, however, equal the dimension of the variety $X_\CC \subset \AA^n_\CC$ defined by the same equations, nor the dimension of the ring $\RR[x_1,\dotsc,x_n]/J$ for an arbitrary ideal $J$ defining $X$.

We define the degree of $X$ as the degree of its projective closure (see Section \ref{sec:hilbert}).
\end{rem*}

Theorem \ref{thm:affine} allows us to use whatever estimates that may be available for varieties of dimension $m-1$ to bound $N(X,B)$. In case $m=1$, the theorem produces a collection of curves of bounded degree harbouring all points of $X(\ZZ,B)$. Using Bézout's theorem, one concludes that $N(X,B) = O_{n,d,\epsi}(B^{1/d+\epsi})$, as in \cite{Bombieri-Pila}.

Theorem \ref{thm:affine} is a corollary to our main result for projective varieties. The following notation will be used. We shall sometimes omit the ground field $\RR$ and write $\PP^n$ for $\PP^n_\RR$. The homothety class of $\xx \in \RR^{n+1}$ is denoted by $[\xx]$. The set of rational points $\PP^n(\QQ)$ consists of the points $x \in \PP^n$ for which we can find a representative $\xx \in \QQ^{n+1}$ such that $[\xx] = x$. If $X \subset \PP^n$ is a locally closed subset, we define $X(\QQ) = X \cap \PP^n(\QQ)$.
 
We define the \emph{height} of a rational point $x \in \PP^n(\QQ)$ as follows. Choose a representative $\xx \in \ZZ^{n+1}$ for $x$ such that $\gcd(x_0,\dotsc,x_n)=1$ and put $H(x) = \max\{|x_0|,\dotsc,|x_n|\}$. The density of rational points on $X$ is measured by the counting function $N(X,\cdot)$ given by
\[
N(X,B) = \#\{x \in X(\QQ); H(x) \leq B\}
\]
for $B \in \RR_{\geq 0}$. More generally, we wish to impose individual restrictions on the coordinates $x_0,\dotsc,x_n$. Let $\bB = (B_0,\dotsc,B_n)$ be an $(n+1)$-tuple of positive real numbers. Then we define
\[
S(X,\bB) = \{\xx \in \ZZ^{n+1}; [\xx] \in X(\QQ), |x_i| \leq B_i, i=0,\dotsc,n\}.
\]

Following Broberg \cite{Broberg02}, we use graded monomial orderings to state our main result. These will be defined in Section \ref{sec:hilbert}. Given a monomial ordering $<$ and an irreducible projective variety $X \subseteq \PP^n$, we associate to the pair $(<,X)$ an $(n+1)$-tuple $(a_0,\dotsc,a_n)$ of real numbers satisfying $0 \leq a_i \leq 1$ and $a_0 + \dotsb + a_n = 1$ (see Proposition \ref{prop:a_i}).   

\begin{thm}
\label{thm:auxiliary}
Let $X \subset \PP^n_\RR$ be an irreducible closed subvariety of dimension $m$ and degree $d$, with ideal $I = I(X) \subset \RR[x_0,\dotsc,x_n]$. Let $\bB = (B_0,\dotsc,B_n)$ be an $(n+1)$-tuple of positive real numbers, and let $<$ be a graded monomial ordering on $\RR[x_0,\dotsc,x_n]$. Then, for any $\epsi >0$, there is a homogeneous polynomial $G \in \ZZ[x_0,\dotsc,x_n] \setminus I$ of degree
\[
k \ll_{n,d,\epsi} (B_0^{a_0} \dotsb B_n^{a_n})^{(m+1)d^{-1/m}+\epsi}, 
\]
all of whose irreducible factors have degree $O_{n,d,\epsi}(1)$, such that $G(\xx) = 0$ for all $\xx \in S(X,\bB)$.  
\end{thm}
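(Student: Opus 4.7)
The strategy is to extend the Bombieri--Pila determinant argument, applied locally on a dissection of the real locus of $X$ inside the box $\prod_{i=0}^{n}[-B_i,B_i]$. I would first dissect the smooth real locus $X(\RR)\cap(\text{box})$ into a controlled number $N$ of small patches, on each of which $X$ admits a real-analytic parameterization by $m$ local coordinates with bounded derivatives. On each patch $U$, a determinant argument produces a homogeneous polynomial $G_U$ of degree $k = O_{n,d,\epsi}(1)$ in $\ZZ[x_0,\ldots,x_n]$, not lying in $I$, vanishing on all of $S(X,\bB)\cap U$. The polynomial $G$ of the theorem is then the product $G=\prod_U G_U$; its total degree is essentially $Nk$, and each irreducible factor inherits the bounded local degree, as required.

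For the local argument, fix a patch $U$ and a target local degree $k$. Set $s = \dim_\RR (R/I)_k$, with $R = \RR[x_0,\ldots,x_n]$, and let $M_1,\ldots,M_s$ be the standard monomial basis of $(R/I)_k$ determined by the graded monomial order $<$, namely the degree-$k$ monomials not lying in the initial ideal $\operatorname{in}_{<}(I)$. For any $s$-tuple of points $\xx_1,\ldots,\xx_s$ in $S(X,\bB)\cap U$, form the $s\times s$ matrix with entries $M_i(\xx_j)$ and let $\Delta\in\ZZ$ be its determinant. If $|\Delta|<1$ for every such choice, then the full $s\times |S(X,\bB)\cap U|$ matrix has rank strictly less than $s$, and a nonzero vector in its left kernel yields the desired $G_U$.

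The key bound on $|\Delta|$ proceeds by Taylor-expanding the analytic functions $M_i\circ\phi$, where $\phi:V\to U$ is the local parameterization. Ordering the basis monomials compatibly with $<$ and applying row operations to replace rows by divided differences up to a suitable Taylor order, one obtains an upper bound of the form $|\Delta| \ll \delta^E \cdot (B_0^{a_0}\cdots B_n^{a_n})^{sk}$, where $\delta$ is the diameter of $U$ and $E$ is the total derivative order that this manipulation introduces. The weights $(a_0,\ldots,a_n)$ from Proposition \ref{prop:a_i} are tailored precisely so that the product of entry magnitudes is controlled by $(B_0^{a_0}\cdots B_n^{a_n})^{sk}$. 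Using the Hilbert polynomial asymptotic $s = H_X(k) \sim dk^m/m!$ together with a matching count for $E$, the condition $|\Delta|<1$ becomes an upper bound on $\delta$; balancing this against a covering count $N \asymp \delta^{-m}$ and optimizing in $k$ and $\delta$ then produces the advertised exponent $(m+1)d^{-1/m}+\epsi$ for the total degree $Nk$.

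The main obstacle I anticipate is the determinant estimate itself: the row-operation manipulation must be aligned with the graded monomial order so that the resulting matrix is close to triangular in a usable way, and matching the combinatorics of the Taylor expansion on an $m$-manifold with the Hilbert function $H_X(k)\sim dk^m/m!$ is what ultimately yields the sharp constant $d^{-1/m}$. Two secondary issues demand attention: the singular locus $\Sing(X)$ must be excised and handled by induction on dimension, and the covering of the real locus together with the uniformity of the implicit constants must be carried out uniformly in $\bB$, so that the optimization in $k$ and $\delta$ is legitimate and every irreducible factor of the final $G$ has degree bounded only in terms of $n$, $d$, and $\epsi$.
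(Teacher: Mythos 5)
Your overall architecture matches the paper's: rescale into a unit box, cover the real locus by small patches, form a determinant of standard monomials (those outside the initial ideal) at integral points, Taylor-expand along a local parametrization, use the Hilbert function asymptotics and the weights $a_i$ from Proposition \ref{prop:a_i} to balance patch size against the determinant bound, and take the product over patches. The determinant manipulation you describe (row reduction to divided differences, which must vanish when too many columns share a given Taylor order) is essentially the paper's Lemma \ref{lem:GeneralDeterminant}.

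The genuine gap is the source of parametrizations with \emph{uniformly bounded derivatives}. You assert that the smooth real locus ``admits a real-analytic parameterization by $m$ local coordinates with bounded derivatives'' on each of a controlled number of patches, but this is not automatic and is in fact the central technical difficulty. Local charts from the implicit function theorem have derivatives that blow up as the chart domain approaches its boundary, and to control them one either recurses (excise the bad set and re-cover, as Bombieri and Pila do in dimension one) or invokes a global reparametrization theorem. The paper uses the Yomdin--Gromov algebraic lemma (Lemma \ref{lem:gromov}): for any $r$ there are $O_{n,d,r}(1)$ maps $\phi_i:[-1,1]^m\to V\cap[-1,1]^n$ of class $C^r$ with $\Vert\phi_i\Vert_r\le 1$ whose images cover $V\cap[-1,1]^n$. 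This is what makes the implicit constants uniform in $\bB$, handles the singular locus without any induction on dimension, and lets $\Vert\psi_i\Vert_\nu\ll_{n,d,\delta}1$ hold on every small cube. Your ``secondary issues'' remark --- excising $\Sing(X)$ and handling it by induction, and uniformity in $\bB$ --- is precisely where the argument would break without Yomdin--Gromov, and the excision-plus-induction route you sketch is not what the paper does and would need a separate (and nontrivial) argument to get the claimed uniformity in higher dimension. The rest of your proposal goes through once Lemma \ref{lem:gromov} is in place.

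One minor calibration note: in the paper the local degree $\delta$ (your $k$) is fixed in terms of $n,d,\epsi$ only, and the number of cubes of sidelength $\rho$ (your $\delta$) is what scales with $\bB$; the Taylor order $\nu$ is then determined from $\mu=\HF_I(\delta)$. Your description blends these parameters somewhat, but the optimization you propose is in substance the same.
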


We stress that this is not a new result (cf. Heath-Brown's original theorem \cite[Thm 14]{Heath-Brown02} and Broberg's generalization \cite[Thm 1]{Broberg02}). Theorem \ref{thm:auxiliary} is proven in Section \ref{sec:projective}.

\begin{acknowledgement*}
I thank my supervisor Per Salberger for suggesting this topic of research. I am grateful to Jonathan Pila for his suggestions that greatly improved this paper, and to Tim Browning who has shown interest in my work.
\end{acknowledgement*}
 

\section{Estimating a determinant}
\label{sec:determinant}

In this section, following Pila \cite{Pila04}, we shall derive an estimate that will play a key role in the proof of Theorem \ref{thm:auxiliary}. We shall follow the notation used in \cite[\S 3]{Pila04}. For a multi-index $\alpha = (\alpha_1,\dotsc,\alpha_m) \in \ZZ_{\geq 0}^m$ we write
\begin{gather*}
|\alpha| = \alpha_1 + \dotsb + \alpha_m, \quad
\alpha! = \alpha_1! \dotsb \alpha_m!.
\end{gather*}
For $\zz, \yy \in \RR^m$, write $\zz - \yy = (z_1 - y_1,\dotsc,z_m - y_m)$. Furthermore, if $\zz \in \RR^m$ and $\alpha \in \ZZ_{\geq 0}^m$, we define $\zz^\alpha = z_1^{\alpha_1} \dotsb z_m^{\alpha_m}$.
Define the sets
\begin{gather*}
\Lambda_m(k) = \{ \alpha \in \ZZ_{\geq 0}^{m}; |\alpha| = k \},\\
\Delta_m(k) = \{ \alpha \in \ZZ_{\geq 0}^{m}; |\alpha| \leq k \} = \bigcup_{i=0}^k \Lambda_m(i)
\end{gather*}
and put $L_m(k) = \#\Lambda_m(k)$, $D_m(k) = \#\Delta_m(k)$. Then we have
\begin{gather*}
L_m(k) = \binom{k+m-1}{m-1}, \\
D_m(k) = \sum_{i=0}^{k} L_m(i) = \binom{k+m}{m}.
\end{gather*}

Let $U\subset \RR^m$ be a compact subset. Consider a function $\psi:U\to\RR$. For a multi-index $\alpha$ we write
\[
\partial^\alpha \psi = \frac{\partial^{\alpha_1}}{\partial x_1^{\alpha_1}} \dotsm \frac{\partial^{\alpha_n}}{\partial x_n^{\alpha_n}} \psi.
\] 
If $\partial^\alpha \psi$ is defined and continuous for all $\alpha$ with $|\alpha| \leq k$ (in which case we write $\psi \in C^k(U)$), define the quantity
\[
 \Vert \psi \Vert_k = \max_{\xx \in U} \max_{0\leq |\alpha|\leq k} \left|\partial^\alpha \psi(\xx)\right|.
\]
We shall investigate how $\Vert \cdot \Vert_k$ behaves under multiplication of functions.

\begin{prop}
\label{prop:monomialnorm}
Let $U\subset \RR^m$ be a compact subset and suppose that $\phi_1,\dotsc,\phi_\ell \in C^k(U)$. Then we have 
\begin{equation}
\label{eq:monomialnorm}
\Vert \phi_1 \dotsb \phi_\ell \Vert_k \leq \ell^k
\Vert \phi_1 \Vert_k \dotsb \Vert\phi_\ell\Vert_k.
\end{equation}
\end{prop}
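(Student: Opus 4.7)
The plan is to reduce the bound to a straightforward application of the generalized Leibniz rule for products of $\ell$ functions. For a multi-index $\alpha$ with $|\alpha| \leq k$, I would write
\[
\partial^\alpha(\phi_1 \dotsb \phi_\ell) = \sum_{\beta_1 + \dotsb + \beta_\ell = \alpha} \frac{\alpha!}{\beta_1! \dotsb \beta_\ell!}\, \partial^{\beta_1}\phi_1 \dotsb \partial^{\beta_\ell}\phi_\ell,
\]
which follows by induction on $\ell$ from the ordinary Leibniz rule applied coordinate by coordinate. Since every $\beta_i$ appearing in the sum satisfies $|\beta_i| \leq |\alpha| \leq k$, each factor $\partial^{\beta_i}\phi_i$ is controlled by $\Vert \phi_i \Vert_k$ on $U$, and so each term of the sum is at most $\Vert \phi_1 \Vert_k \dotsb \Vert \phi_\ell \Vert_k$ in absolute value.

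The remaining step is then purely combinatorial: by the multinomial theorem,
\[
\sum_{\beta_1 + \dotsb + \beta_\ell = \alpha} \frac{\alpha!}{\beta_1! \dotsb \beta_\ell!} = \ell^{|\alpha|} \leq \ell^k.
\]
Combining the two estimates and taking the maximum over $\xx \in U$ and over $\alpha$ with $|\alpha| \leq k$ yields \eqref{eq:monomialnorm}. There is no real obstacle here; the only mild care is in stating the generalized Leibniz formula for $\ell$ factors, which one establishes by a simple induction, and in noting that the multinomial identity holds for a fixed multi-index $\alpha$ (by applying the usual multinomial theorem separately in each of the $m$ coordinates, or equivalently by expanding $(\phi_1 + \dotsb + \phi_\ell)$-style generating considerations).
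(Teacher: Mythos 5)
Your proof is correct and is essentially the same argument as the paper's: the paper expands $\partial^\alpha(\phi_1\dotsb\phi_\ell)$ by iterating the product rule into $\ell^{|\alpha|}$ terms (counted with multiplicity) and bounds each term by $\Vert\phi_1\Vert_k \dotsb \Vert\phi_\ell\Vert_k$, which is precisely what your generalized Leibniz formula with the multinomial-coefficient sum $\sum \alpha!/(\beta_1!\dotsb\beta_\ell!) = \ell^{|\alpha|}$ expresses after grouping repeated terms.
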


\begin{proof}
Suppose that $\alpha$ is a multi-index with $|\alpha| \leq k$. Using the product rule, we expand $\partial^\alpha (\phi_1 \dotsb \phi_\ell)$ into a sum of $\ell^{|\alpha|}$ terms of the form
\[
(\partial^{\alpha^1} \phi_1) \dotsm (\partial^{\alpha^\ell} \phi_\ell),
\]
where $\alpha^j \in \ZZ_{\geq 0}^m$ satisfy $\alpha^1 + \dotsb +\alpha^\ell = \alpha$. Clearly we have
\[
\sup_{\xx \in U} |(\partial^{\alpha^1} \phi_1) \dotsm (\partial^{\alpha^\ell} \phi_\ell)| \leq 
\Vert \phi_1 \Vert_k \dotsb \Vert\phi_\ell\Vert_k,
\]
so the estimate \eqref{eq:monomialnorm} holds.
\end{proof}

The following result is a slight modification of Lemma 3.1 in \cite{Pila04}.

\begin{lemma}
\label{lem:GeneralDeterminant}
Let $\mu \in \NN$. Let $U \subset \RR^m$ be a compact convex set of diameter $r<1$, and let $\psi_1,\dotsc,\psi_\mu$ be real-valued $C^\nu$ functions on $U$. Given $\mu$ distinct points $\xx^{(1)}, \xx^{(2)}, \dotsc,\xx^{(\mu)}$ in $U$, define the $\mu \times \mu$-determinant
\[
 \Delta = \det (\psi_i(\xx^{(j)})).
\]
Suppose that $\nu \in \NN$ satisfies $D_{m}(\nu-1) \leq \mu \leq D_{m}(\nu)$, and define
\[
 e = \sum_{i=0}^{\nu-1} i L_{m}(i) + \nu (\mu - D_{m}(\nu -1)).
\]
Then we have
\begin{equation}
\label{eq:GeneralDeterminant}
 |\Delta| \leq  \mu! D_{m}(\nu)^\mu \prod_{i=1}^\mu \Vert \psi_i \Vert_\nu \cdot r^e.
\end{equation}
\end{lemma}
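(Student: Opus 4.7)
The plan is to Taylor expand each $\psi_i$ around a fixed basepoint of $U$ and then exploit the multilinearity of the determinant. This follows the strategy of Pila, but now with a distinct function in each row rather than a single function.

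Pick a basepoint $\xx_0 \in U$ and, for each $i$, apply the multivariate Taylor theorem with Lagrange remainder on the convex set $U$ to write
\[
\psi_i(\xx^{(j)}) = \sum_{|\alpha| \leq \nu-1} \frac{\partial^\alpha \psi_i(\xx_0)}{\alpha!} (\xx^{(j)} - \xx_0)^\alpha + \sum_{|\alpha|=\nu} \frac{\partial^\alpha \psi_i(\xxi_{i,j})}{\alpha!} (\xx^{(j)} - \xx_0)^\alpha,
\]
where $\xxi_{i,j} \in U$ lies on the segment from $\xx_0$ to $\xx^{(j)}$. This displays $\psi_i(\xx^{(j)})$ as a sum over the $D_m(\nu)$ multi-indices $\alpha \in \Delta_m(\nu)$ of terms of the shape $c_{i,j,\alpha}(\xx^{(j)} - \xx_0)^\alpha$ with $|c_{i,j,\alpha}| \leq \Vert \psi_i \Vert_\nu$. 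Crucially, the coefficient $c_{i,j,\alpha}$ is independent of $j$ whenever $|\alpha| < \nu$, since the $j$-dependence is segregated into the order-$\nu$ remainder.

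Next I would expand $\Delta$ by multilinearity in the $\mu$ columns, producing a sum of at most $D_m(\nu)^\mu$ sub-determinants indexed by tuples $(\alpha^{(1)},\dotsc,\alpha^{(\mu)}) \in \Delta_m(\nu)^\mu$. From each column $j$ I factor out the scalar $(\xx^{(j)} - \xx_0)^{\alpha^{(j)}}$, whose absolute value is at most $r^{|\alpha^{(j)}|}$ since $U$ has diameter $r$. The critical cancellation is this: if two indices $j \neq j'$ share a multi-index $\alpha^{(j)} = \alpha^{(j')}$ with $|\alpha^{(j)}| \leq \nu-1$, then the two corresponding columns of the residual matrix are literally equal, so the sub-determinant vanishes. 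Hence the only tuples that contribute are those in which the $\alpha^{(j)}$ with $|\alpha^{(j)}| < \nu$ are pairwise distinct. Since $|\Delta_m(\nu-1)| = D_m(\nu-1)$, at least $\mu - D_m(\nu-1)$ of the columns must then have $|\alpha^{(j)}| = \nu$.

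A small combinatorial check completes the argument: the total weight $\sum_j |\alpha^{(j)}|$ of any surviving tuple is at least $e$ (the minimum is attained by using each multi-index with $|\alpha| < \nu$ exactly once and filling the remaining $\mu - D_m(\nu-1)$ columns with weight $\nu$). Since $r<1$ this yields $\prod_j r^{|\alpha^{(j)}|} \leq r^e$. After extracting these powers of $r$ the residual $\mu \times \mu$ determinant has entries bounded by $\Vert \psi_i \Vert_\nu$ in row $i$, so by the Leibniz expansion its absolute value is at most $\mu! \prod_i \Vert \psi_i \Vert_\nu$. Summing over the at most $D_m(\nu)^\mu$ surviving tuples gives the claimed estimate. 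I expect no genuine obstacle: the one point that requires care is making sure that the Lagrange form is used so that the $j$-dependence is confined to the $|\alpha| = \nu$ part, which is exactly what powers the vanishing step.
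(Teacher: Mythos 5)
Your proof is correct and follows essentially the same route as the paper's: Taylor expansion to order $\nu-1$ with Lagrange remainder, multilinear expansion of the determinant into $D_m(\nu)^{\mu}$ pieces, factoring powers of $r$ out of columns, and observing that repeated multi-indices of order $<\nu$ force linearly dependent columns, which yields the weight lower bound $e$. The only cosmetic difference is that you expand about an arbitrary basepoint $\xx_0\in U$ while the paper takes $\xx_0=\xx^{(1)}$; this changes nothing since both use $\mathrm{diam}\,U=r$ to bound $|(\xx^{(j)}-\xx_0)^{\alpha}|\leq r^{|\alpha|}$.
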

\begin{proof}
We write each entry of $\Delta$ in the following form, using a Taylor expansion of order $\nu - 1$ around $\xx^{(1)}$:
\begin{equation}
\label{eq:Taylor}
\psi_i(\xx^{(j)}) = \sum_{\alpha \in \Delta_m(\nu -1)} \frac{\partial^\alpha \psi_i(\xx^{(1)})}{\alpha!} (\xx^{(j)}-\xx^{(1)})^\alpha + R_\nu(\xx^{(j)}). 
\end{equation}
Then there exists a point $\xxi_{ij}$ on the line segment joining $\xx^{(1)}$ and $\xx^{(j)}$ such that
\[
 R_\nu(\xx^{(j)}) = \sum_{\alpha \in \Lambda_m(\nu)} \frac{\partial^\alpha \psi_i(\xxi_{ij})}{\alpha!} (\xx^{(j)}-\xx^{(1)})^\alpha.
\]
The number of terms in each such sum is $D_{m}(\nu)$. Write each column in $\Delta$ as a sum of $D_{m}(\nu)$ column vectors according to \eqref{eq:Taylor}, where each vector corresponds to a specific $\alpha \in \ZZ_{\geq 0}^{m}$. This gives an expansion of $\Delta$ as a sum of $D_{m}(\nu)^\mu$ determinants $\Delta_\ell$ of the same dimension as $\Delta$. However, any of these determinants possessing more than $L_m(i)$ columns corresponding to monomials of order $i$ in $\xx^{(j)}-\xx^{(1)}$, for any $i\leq \nu-1$, has to vanish, since then these columns are linearly dependent. Consequently, the determinants $\Delta_\ell$ satisfy
\[
|\Delta_\ell| \leq \mu! \prod_{i=1}^\mu \Vert \psi_i \Vert_\nu \cdot r^e.
\]
Summing over $1\leq \ell \leq D_{m}(\nu)^\mu$ we get \eqref{eq:GeneralDeterminant}.
\end{proof}


\section{Monomial orderings and Hilbert functions}
\label{sec:hilbert}

We shall review the basics of Hilbert functions, following mainly the exposition in \cite{CLO}. Let $K$ be a field. If $\alpha = (\alpha_0,\dotsc,\alpha_n) \in \ZZ_{\geq 0}^{n+1}$, then we shall write $\xx^\alpha$ for the monomial $x_0^{\alpha_0} \dotsb x_n^{\alpha_n}$.

\begin{def*}
A \emph{graded monomial ordering} on $K[x_0,\dotsc,x_n]$ is a total ordering $<$ on $\ZZ^{n+1}_{\geq 0}$ (or, equivalently, on the set of monomials $\{\xx^\alpha; \alpha \in \ZZ^{n+1}_{\geq 0}\}$) satisfying
\begin{enumerate}
\item
$\alpha \geq \0$ for any $\alpha \in \ZZ^{n+1}_{\geq 0}$;
\item
if $\alpha, \beta, \gamma \in \ZZ^{n+1}_{\geq 0}$ and  $\alpha < \beta$, then $\alpha + \gamma < \beta + \gamma$;
\item
if $\alpha, \beta \in \ZZ^{n+1}_{\geq 0}$ and $\alpha \leq \beta$, then $|\alpha| \leq |\beta|$.
\end{enumerate}
\end{def*}

Given a graded monomial ordering $<$, we can define the \emph{leading monomial} and \emph{leading term} of a polynomial
\[ 
f = \sum c_\alpha \xx^\alpha \in K[x_0,\dotsc,x_n]
\] 
as
\[
\LM(f) = \xx^\beta, \quad \LT(f) = c_\beta \xx^\beta, \text{ where } \beta = \max \{\alpha \in \ZZ^{n+1}_{\geq 0}; c_\alpha \neq 0\}.
\]

For $s \in \ZZ_{\geq 0}$, let $K[x_0,\dotsc,x_n]_s$ be the $K$-vector space of homogeneous polynomials of degree $s$ (including the zero polynomial). A basis for $K[x_0,\dotsc,x_n]_s$ is given by the monomials $\xx^\alpha$, $\alpha \in \Lambda_{n+1}(s)$.


If $I \subseteq K[x_0,\dotsc,x_n]$ is a homogeneous ideal, let 
\[
I_s = I \cap K[x_0,\dotsc,x_n]_s,
\]
a $K$-subspace of $K[x_0,\dotsc,x_n]_s$. Define the \emph{Hilbert function} of $I$, 
\[
\HF_I: \ZZ_{\geq 0} \to \ZZ_{\geq 0},
\] 
by
\[
\HF_I(s) = \dim_K \left(K[x_0,\dotsc,x_n]_{s}/I_{s}\right).
\]
For any ideal $I\subseteq K[x_0,\dotsc,x_n]$, let $\LT(I)$ be the ideal generated by the leading terms of all the polynomials in $I$.


\begin{prop}
\label{prop:hilbertfunction}
Let $I \subset K[x_0,\dotsc,x_n]$ be a homogeneous ideal. Then $\HF_I = \HF_{\LT(I)}$. In other words, $\HF_I(s)$ is the number of monomials of degree $s$ that are not leading monomials of any $F \in I$.
\end{prop}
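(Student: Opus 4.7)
My plan is to exhibit a canonical basis for the quotient $K[x_0,\dotsc,x_n]_s/I_s$, consisting precisely of the (images of the) monomials of degree $s$ that do not lie in $\LT(I)$. Since $\LT(I)$ is a monomial ideal, the same description, applied to $\LT(I)$ in place of $I$, gives the corresponding basis for $K[x_0,\dotsc,x_n]_s/\LT(I)_s$, whence the two Hilbert values agree.

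The crucial preliminary step is to leverage the graded property of $<$ together with the homogeneity of $I$ in order to move freely between $\LT(I)$ and $I_s$. Concretely, I would prove: if $\xx^\beta$ is a monomial of degree $s$ lying in $\LT(I)$, then there exists $F \in I_s$ with $\LM(F) = \xx^\beta$. Indeed, by definition of $\LT(I)$, $\xx^\beta$ is divisible by $\LM(g)$ for some $g\in I$, say $\xx^\beta = \xx^\gamma\LM(g)$; then $\xx^\gamma g\in I$ has leading monomial $\xx^\beta$. Because $I$ is homogeneous, decomposing $\xx^\gamma g$ into homogeneous components yields elements of $I_t$ for various $t$, and because $<$ is a graded ordering the leading monomial comes from the component of highest degree, which is therefore the degree-$s$ component. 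That component lies in $I_s$ and has leading monomial $\xx^\beta$.

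With this tool in hand I would show that the monomials $\xx^\alpha$ of degree $s$ with $\xx^\alpha \notin \LT(I)$ form a basis of $K[x_0,\dotsc,x_n]_s/I_s$. Spanning: given $f \in K[x_0,\dotsc,x_n]_s$, if $\LM(f) \in \LT(I)$, subtract a scalar multiple of the $F$ just constructed to strictly decrease the leading monomial, then iterate; since there are only finitely many monomials of degree $s$ (or, more abstractly, since $<$ is a well-ordering), the process terminates at an $f'\equiv f\pmod{I_s}$ all of whose monomials lie outside $\LT(I)$. Linear independence: if $\sum c_\alpha \xx^\alpha \in I_s$ with every $\xx^\alpha \notin \LT(I)$ and the sum nonzero, then its leading monomial is one of these $\xx^\alpha$, contradicting $\xx^\alpha\notin\LT(I)$; hence all $c_\alpha$ vanish.

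Finally, since $\LT(I)$ is a monomial ideal, the same argument applied to it (where now the nontrivial preliminary step is trivial: leading monomials in a monomial ideal are just the monomials in the ideal) shows that $\HF_{\LT(I)}(s)$ equals the number of monomials of degree $s$ that are not in $\LT(I)$, which matches $\HF_I(s)$. The main conceptual obstacle is the first step above, i.e.\ ensuring that an arbitrary leading monomial appearing in $\LT(I)_s$ is realised as the leading monomial of an element of $I_s$ (and not merely of some inhomogeneous element of $I$); this is exactly where the \emph{graded} hypothesis on $<$ and the homogeneity of $I$ enter in an essential way.
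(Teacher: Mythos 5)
The paper offers no proof of its own here, only the citation to Cox--Little--O'Shea. Your argument reconstructs the standard proof underlying that citation: show that the degree-$s$ monomials outside $\LT(I)$ descend to a $K$-basis of $K[x_0,\dotsc,x_n]_s/I_s$, and observe that the same description applied to the monomial ideal $\LT(I)$ (for which $\LT(\LT(I))=\LT(I)$) gives the same count, so $\HF_I=\HF_{\LT(I)}$. Your treatment of the crucial preliminary point is correct and well isolated: if $\xx^\beta\in\LT(I)$ with $|\beta|=s$, then $\xx^\beta=\xx^\gamma\LM(g)$ for some $g\in I$, and because $<$ is graded the monomial $\xx^\beta$ cannot be exceeded in degree by any monomial of $\xx^\gamma g$, so the degree-$s$ homogeneous component $h_s$ of $\xx^\gamma g$ (which lies in $I_s$ since $I$ is homogeneous) has $\LM(h_s)=\xx^\beta$. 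This is exactly where both hypotheses enter.

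One small imprecision in the spanning step: as written, you repeatedly replace $f$ by $f-cF$ while $\LM(f)\in\LT(I)$, and stop when $\LM(f)\notin\LT(I)$. At that point you conclude $f'$ has \emph{all} monomials outside $\LT(I)$, but the described process only guarantees $\LM(f')\notin\LT(I)$; lower-order terms of $f'$ could still lie in $\LT(I)$. The fix is easy and standard: either reduce at each step the \emph{largest} monomial of $f$ lying in $\LT(I)$ (this strictly decreases that monomial, so the process terminates with every remaining monomial outside $\LT(I)$), or argue by induction on $\LM(f)$ (once $\LM(f)\notin\LT(I)$ it is a standard monomial, and the tail $f-\LT(f)$ has strictly smaller leading monomial). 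The linear-independence step and the final application to $\LT(I)$ are both correct as stated.
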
 
\begin{proof}
\cite[Prop. 9, Ch. 9.3]{CLO}
\end{proof}

We shall also have use of the following functions, related to the Hilbert function. For each $i \in \{0,\dotsc,n\}$, define
\[
\sigma_{I,i}(s) = \sum_{\substack{\alpha \in \Lambda_{n+1}(s)\\\xx^\alpha \notin \LT(I)}} \alpha_i.
\]
From Proposition \ref{prop:hilbertfunction} we see that
\begin{equation}
\label{eq:sum-sigma}
\sigma_{I,0}(s) + \dotsb + \sigma_{I,n}(s) = s \HF_I(s).
\end{equation}

%

Let $I \subset K[x_0,\dotsc,x_n]$ be a homogeneous ideal, and suppose that $I$ is generated by polynomials of degree at most $\delta$. Then there are polynomials $P_I, Q_{I,0},\dotsc,Q_{I,n} \in \QQ[t]$, and an integer $s_0$, depending only on $n$ and $\delta$, such that for every $s \geq s_0$ we have
\begin{gather*}
\HF_I(s) = P_I(s), \ \sigma_{I,0}(s) = Q_{I,0}(s), \dotsc, \sigma_{I,n}(s) = Q_{I,n}(s).
\end{gather*}
Furthermore, the coefficients of $P_I, Q_{I,0},\dotsc,Q_{I,n}$ are also bounded in terms of $n$ and $\delta$. A proof of these statements can be found in \cite{Broberg02}. $P_I$ is called the \emph{Hilbert polynomial} of $I$.

Let $X \subseteq \PP^n_K$ be an irreducible variety, and let $I = I(X)$ be its ideal. We define the Hilbert function and Hilbert polynomial of $X$ by $\HF_X = \HF_I$, $P_X = P_I$. The degree of $P_X$ equals the dimension of $X$. Furthermore, if $m = \dim X$ and 
\[
P_X(t) = \sum_{i=0}^m b_i t^i, 
\]
then the \emph{degree} of $X$ is $\deg X = m! b_m$. By \eqref{eq:sum-sigma} we see that each of the $Q_{I,i}$ has degree at most $m+1$.

\begin{rem}
\label{rem:Hilbert}
Given positive integers $d$ and $n$, there is a finite set $\cF_{n,d}$ of functions $\ZZ_{\geq 0} \to \ZZ$ such that $\HF_X \in \cF_{n,d}$ for all subvarieties $X \subset \PP^n_K$ of degree $d$. (See \cite[Lemma 1.4]{Salberger07} and the references listed there.) One can then prove that the ideal $I$ of $X$ is generated by polynomials of degree $O_{n,d}(1)$ \cite[Lemma 1.3]{Salberger07}. In particular, all the coefficients of $P_X$ and $Q_{I,0},\dotsc,Q_{I,n}$ can be bounded in terms of $\deg X$ and $n$.
\end{rem}

From equation \eqref{eq:sum-sigma} and the following discussion, we draw the following conclusion.
\begin{prop}
\label{prop:a_i}
Let $X \subseteq \PP^n_K$ be an irreducible variety of degree $d$, and let $I$ be its ideal. Then there are numbers $a_{I,i} \in [0,1]$ for $i \in \{0,\dotsc,n\}$ such that
\begin{gather}
\frac{\sigma_{I,i}(s)}{s \HF_I(s)} = a_{I,i} + O_{n,d}(1/s) \text{ as } s \to \infty,\\
a_{I,0} + \dotsb + a_{I,n} = 1. 
\end{gather} 
\end{prop}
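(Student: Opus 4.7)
The plan is to read off the conclusion from a leading-coefficient comparison between the polynomials $P_I$ and $Q_{I,0},\dotsc,Q_{I,n}$ whose existence is guaranteed by the discussion preceding the proposition.

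First, I would invoke the fact (stated just before the proposition, and relying on Remark \ref{rem:Hilbert}) that there exists $s_0$, depending only on $n$ and $d$, such that $\HF_I(s) = P_I(s)$ and $\sigma_{I,i}(s) = Q_{I,i}(s)$ for every $s \geq s_0$, and such that all coefficients of $P_I$ and of the $Q_{I,i}$ are bounded in terms of $n$ and $d$. Since $X$ is irreducible of dimension $m$ and degree $d$, $P_I$ has degree $m$ with leading coefficient $d/m!$, so $s P_I(s)$ has degree $m+1$ with leading coefficient $d/m!$. Each $Q_{I,i}$ has degree at most $m+1$; write its coefficient of $s^{m+1}$ as $c_{I,i}$ (which may be zero). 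Since $\sigma_{I,i}(s) \geq 0$ for all $s$, we must have $c_{I,i} \geq 0$.

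Next I would define $a_{I,i} = c_{I,i} m!/d$ and check the two claimed properties. For the asymptotic statement, writing
\[
\frac{Q_{I,i}(s)}{s P_I(s)} = \frac{c_{I,i} s^{m+1} + O_{n,d}(s^m)}{(d/m!)\, s^{m+1} + O_{n,d}(s^m)}
\]
and pulling out the leading terms gives $a_{I,i} + O_{n,d}(1/s)$ once $s$ is large enough that the denominator is bounded below by a positive quantity depending only on $n$ and $d$; this last point uses that the coefficients of $P_I$ are bounded in terms of $n,d$, so the subleading terms are dominated once $s \gg_{n,d} 1$. For the sum, identity \eqref{eq:sum-sigma} gives $\sum_i Q_{I,i}(s) = s P_I(s)$ for all $s \geq s_0$, hence as polynomials; comparing coefficients of $s^{m+1}$ yields $\sum_i c_{I,i} = d/m!$, and therefore $\sum_i a_{I,i} = 1$. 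Combined with $a_{I,i} \geq 0$ this also gives $a_{I,i} \leq 1$.

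The only mildly delicate point is making the implicit constant in the $O_{n,d}(1/s)$ term genuinely depend on $n$ and $d$ alone; this is where Remark \ref{rem:Hilbert} is essential, because it pins down that all coefficients appearing in $P_I$ and $Q_{I,i}$, as well as the threshold $s_0$, are controlled purely by $n$ and $d$. Apart from this bookkeeping, the argument is a one-line extraction of leading coefficients from polynomial identities, so I do not expect any real obstacle.
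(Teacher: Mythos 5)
Your proof is correct and takes essentially the same route the paper intends: the paper gives no formal proof, simply pointing to \eqref{eq:sum-sigma} together with the preceding discussion of $P_I$, $Q_{I,i}$ and Remark~\ref{rem:Hilbert}. Defining $a_{I,i}$ from the degree-$(m+1)$ coefficient of $Q_{I,i}$, using nonnegativity of $\sigma_{I,i}$ to get $a_{I,i}\geq 0$, and comparing leading coefficients in the polynomial identity $\sum_i Q_{I,i}(s) = s\,P_I(s)$ is exactly the argument being left to the reader.
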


\begin{rem}[Hilbert function and degree for affine varieties]
\label{rem:projectiveHF}
For $s \in \ZZ_{\geq 0}$, let $K[x_1,\dotsc,x_n]_{\leq s}$ be the vector space of polynomials of degree at most $d$. Let $I \subseteq K[x_1,\dotsc,x_n]$ be an ideal. Let $I_{\leq s} = I \cap K[x_1,\dotsc,x_n]_{\leq s}$, a $K$-subspace of $K[x_1,\dotsc,x_n]_{\leq s}$. Then we define the \emph{affine Hilbert function} of $I$ to be  
\[
\AHF_I(s) = \dim_K \left(K[x_1,\dotsc,x_n]_{\leq s}/I_{\leq s}\right),
\]
Consequently, if $X \subseteq \AA^n_K$ is an irreducible variety, and $I$ its ideal, we define $\AHF_X = \AHF_I$. However, if $I^h \subseteq K[x_0,\dotsc,x_n]$ denotes the homogenization of $I$, and $\overline X \subseteq \PP^n_K$ the projective closure of $X$ (see \cite[Ch. 8.4]{CLO}), then by \cite[Thm. 12, Ch. 9.3]{CLO} we have $\AHF_X=\AHF_{I^h} = \HF_{I}=\HF_{\overline X}$. In particular, we can define the \emph{degree} of $X$ as 
\[
\deg X = \deg \overline X.
\]
\end{rem}

\section{Yomdin and Gromov's algebraic lemma}

In most applications of the determinant method, one uses the implicit function theorem to parametrize the points of a variety $X$. When applying a determinant estimate like Lemma \ref{lem:GeneralDeterminant}, we have to bound the sizes of the partial derivatives of the implicit functions. In the original paper \cite{Bombieri-Pila}, this is done by excising the subset where the partial derivatives blow up, and covering it with smaller boxes, in which one carries out the method again, recursively. However, in a recent paper \cite{Pila-Wilkie}, Pila and Wilkie employ a more powerful method of parametrization, due to Yomdin and Gromov. 

We extend the notions defined in Section \ref{sec:determinant} to vector-valued functions: a function $\phi:U \to \RR^n$, where as before $U$ is a compact subset of $\RR^m$, belongs to the class $C^k$ if all its coordinate functions do. In this case we define
\[
 \Vert \phi \Vert_k = \max_{\xx \in U} \max_{0\leq |\alpha|\leq k} \left|\partial^\alpha \phi(\xx)\right|,
\]
where $|(x_1,\dotsc,x_n)| = \max_i |x_i|$. The result we shall now state was first proven in a weaker form by Yomdin \cite{Yomdin87A}, and the final version was obtained by Gromov \cite{Gromov}. The proof in \cite{Gromov} is rather brief, but recently Burguet \cite{Burguet} has given a more detailed proof. Another proof can be extracted from that of Pila and Wilkie \cite{Pila-Wilkie} in the more general setting of definable sets.

\begin{lemma}
\label{lem:gromov}
Let $V \subset \AA^n_\RR$ be an algebraic variety of dimension $m<n$ and degree $d$, and let $Y = V \cap [-1,1]^n$. For each $r \in \ZZ_+$, there exists an integer $N_0$, depending only on $n$, $r$ and $d$, and $C^r$-functions $\phi_i : [-1,1]^m \to Y$ for $i=1,2,\dotsc,N_0$, such that
\[
\bigcup_{i=1}^{N_0} \phi_i([-1,1]^m) = Y \text{ and } \Vert \phi_i \Vert_r \leq 1.
\] 
\end{lemma}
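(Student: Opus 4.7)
The plan is to proceed by induction on the dimension $m$ of $V$, exploiting algebraicity at every stage to control all combinatorial parameters in terms of $n$, $r$, and $d$ alone. For the base case $m=0$, the set $Y$ consists of at most $d$ points, each of which is parametrized by a constant map $[-1,1]^0 \to Y$; all partial derivatives then vanish identically.

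For the inductive step, my first task is to obtain a uniformly finite family of smooth \emph{graph} parametrizations covering a Zariski-open subset of $V$. I would choose a coordinate projection $\pi \colon \AA^n_\RR \to \AA^m_\RR$ (after a generic linear change of coordinates if necessary) so that $\pi|_V$ is generically finite of degree at most $d$. Outside the critical locus $W \subset V$ --- the union of $\Sing(V)$ and the ramification locus of $\pi|_V$ --- the implicit function theorem expresses $V$ locally as the graph of $n-m$ real-analytic functions of $m$ variables. A semi-algebraic cell decomposition of $\pi(V \setminus W) \cap [-1,1]^m$, which by B\'ezout-type bounds contains only $O_{n,d}(1)$ cells, then yields $O_{n,d}(1)$ graph parametrizations $\phi \colon U \to V$ on semialgebraic open cells $U \subset [-1,1]^m$. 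The variety $V \cap W$ has dimension at most $m-1$ and degree $O_{n,d}(1)$, so the inductive hypothesis covers $V \cap W \cap [-1,1]^n$ by $O_{n,r,d}(1)$ admissible maps of $[-1,1]^{m-1}$; precomposing these with a projection $[-1,1]^m \to [-1,1]^{m-1}$ produces admissible $C^r$-maps on $[-1,1]^m$ whose norm is still at most $1$.

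The central step is then to reparametrize each $\phi \colon U \to V \setminus W$ by a bounded number of maps from $[-1,1]^m$ so that the composed $C^r$-norm is at most $1$. I would argue as Gromov does, inductively on the order of differentiation: at stage $j$, having already ensured $\Vert \phi \Vert_{j-1} \leq 1$, partition $U$ into $O_{n,r,d}(1)$ semialgebraic subcells so that on each subcell $U'$ there is an affine map $\psi \colon [-1,1]^m \to U'$ with linear factors $\leq 1$ for which $\Vert \phi \circ \psi \Vert_j \leq 1$. The rescaling factors being $\leq 1$ is what guarantees that the previously established bounds on derivatives of order $<j$ are preserved under composition. The algebraicity of $\phi$, as a branch of a variety of bounded degree, is what keeps the number of subcells bounded, independently of the local oscillation of $\phi$.

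The main obstacle is precisely this reparametrization: proving that $O_{n,r,d}(1)$ subdivisions suffice to simultaneously tame all derivatives up to order $r$. The delicate point is that the analytic functions appearing as coordinates of $\phi$ may oscillate extremely quickly near $W$, so the bound on the number of cells must not depend on how close $U$ is to $\pi(W)$. This is handled by applying B\'ezout's theorem to the polynomial equations defining the level sets of the successive partial derivatives --- a standard tool in semialgebraic geometry, but whose effective application here requires the careful inductive organisation of Yomdin and Gromov.
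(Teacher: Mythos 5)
The paper does not actually prove Lemma~\ref{lem:gromov}; it cites Yomdin, Gromov, Burguet, and Pila--Wilkie for the result, and the only original content of the surrounding text is the observation (via Remark~\ref{rem:Hilbert}) that bounding $N_0$ by the degree of $Y$ is equivalent to Gromov's formulation in terms of the degrees of a set of defining polynomials. So your sketch should be measured against the proofs in the cited literature, not against anything in this paper.

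The overall architecture you describe --- induction on $m$, a generic linear projection to $\AA^m_\RR$ making $\pi|_V$ finite of degree $\leq d$, graph parametrizations off the singular/ramification locus $W$ via a cell decomposition with $O_{n,d}(1)$ cells, and a separate inductive treatment of the lower-dimensional set $V\cap W$ --- is the right shape and matches the standard approach. However, your reparametrization step contains a genuine gap: you assert that at stage $j$ one can subdivide $U$ into $O_{n,r,d}(1)$ semialgebraic subcells and use \emph{affine} maps $\psi$ with linear factors $\leq 1$ so that $\Vert \phi\circ\psi\Vert_j \leq 1$. This cannot work. Consider the one-variable, one-derivative case $\phi(x)=\sqrt{x}$ on $(0,1]$, which arises exactly from a graph branch near a ramification point: for any affine $\psi(t)=at+b$ with $0<a\leq 1$ one has $(\phi\circ\psi)'(t)=a\,\phi'(at+b)$, and near $b=0$ this is unbounded; no fixed finite number of subintervals with affine rescalings can cure the blow-up, since the bad behaviour persists on the cell touching the boundary. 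The actual content of Gromov's algebraic lemma is that \emph{nonlinear} reparametrizations, themselves semialgebraic of bounded complexity, must be used --- e.g.\ $x=t^2$ turns $\sqrt{x}$ into $|t|$ --- and the difficulty is to organise these substitutions across all $r$ orders of derivative and $m$ variables with a bound depending only on $(n,r,d)$. That is precisely what Burguet's and Pila--Wilkie's arguments supply, via a one-variable reparametrization lemma fed into a double induction on $m$ and $r$; declaring the maps affine short-circuits the entire problem.
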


Our formulation differs from Gromov's on two points. Firstly, we use $[-1,1]^m$ and $[-1,1]^n$ instead of $[0,1]^m$ and $[0,1]^n$. It is completely obvious that these two cases give equivalent statements. Secondly, we claim that $N_0$ is bounded in terms of the degree of $Y$, whereas Gromov uses the sum of the degrees of a collection of polynomials defining $Y$. 
To see that our statement follows from Gromov's, we appeal to Remark 3.1.

\section{Rational points on projective varieties}
\label{sec:projective}

In this section we shall prove Theorem \ref{thm:auxiliary}. To begin with, we have
\[
S(X,\bB) = \bigcup_{i=0}^n S_i(X,\bB),
\]
where $S_i(X,\bB)$ is the set of $\xx \in S(X,\bB)$ such that $|x_j/B_j| \leq |x_i/B_i|$ for all $j \in \{0,\dotsc,n\}$. Thus, let $i \in \{0,\dotsc,n\}$. We shall find a form vanishing at every point of $S_i(X,\bB)$. By permuting the variables, we can assume that $i=0$.

Let $\tau: \RR^{n+1} \to \RR^{n+1}$ be given by
\[
\tau(x_0,\dotsc,x_n) = \left( \frac{1}{B_0} x_0, \dotsc, \frac{1}{B_n} x_n \right).
\]
$\tau$ induces an automorphism $\bar\tau: \PP^n_\RR \to \PP^n_\RR$. Let $Z \subset \PP^n_\RR$ be the image of $X$ under $\bar\tau$. Furthermore, let 
\[
T_0(X,\bB) = \tau (S_0(X,\bB)) \subseteq [-1,1]^{n+1}.
\]

Let $\iota_0 : \AA^n_\RR \to \PP^n_\RR$ be the open immersion given by
\[
(z_1,\dotsc,z_n) \mapsto [(1,z_1,\dotsc,z_n)],
\]
and let $Z_0 = \iota_0^{-1}(Z)$. By Remark \ref{rem:projectiveHF} we have $\deg Z_0 = \deg Z = \deg X$. Note also that $\left\vert \iota_0^{-1}([\yy]) \right\vert \leq 1$ if $\yy \in T_0(X,\bB)$.
 
For any $\delta \in \ZZ_{\geq 0}$, define
\[
M(\delta) = \{\ee \in \ZZ_{\geq 0}^{n+1}; |\ee| = \delta, \xx^\ee \notin \LT(I) \}.
\]
We say that a polynomial $G$ is \emph{defined in $M(\delta)$} if $\ee \in M(\delta)$ for all monomials $\xx^\ee$ occurring in $G$. If this is the case, then $G \notin I$.

Write $\sigma_i = \sigma_{I,i}(\delta)$, that is 
\begin{equation}
\label{eq:sigma_i}
\sigma_i = \sum_{\ee \in M(\delta)} e_i.
\end{equation}
 
Put $\mu = |M(\delta)|$, which by Proposition \ref{prop:hilbertfunction} equals $\HF_I(\delta)$, and choose the integer $\nu$ so that
\begin{equation}
\label{eq:choose_nu2}
D_{m}(\nu - 1) \leq \mu \leq D_{m}(\nu).
\end{equation}
Define
\begin{gather}
\label{eq:f}
f = \sum_{i=0}^{\nu-1} i L_m(i) + \nu (\mu - D_m(\nu-1)).
\end{gather}

By Yomdin and Gromov's lemma, there is a collection of $C^\nu$ functions 
\[
\phi_1,\dotsc,\phi_N:[-1,1]^m \to [-1,1]^n,
\]
such that $\Vert \phi_i \Vert_\nu \leq 1$, and whose images cover $Z_0 \cap [-1,1]^n$. The number of functions needed, $N$, depends only upon $n$, $d$ and $\delta$. Let $\phi \in \{\phi_1,\dotsc,\phi_N\}$, and define
\[
S_{0,\phi}(X,\bB) = \{\xx \in S_0(X,\bB); \bar\tau([\xx]) \in \iota_0(\im \phi)\}.
\]
Then we want to find a collection $\cG = \{G_1,\dotsc,G_k\}$ of homogeneous polynomials defined in $M(\delta)$ such that for each $\xx \in S_{0,\phi}(X,\bB)$, there is a polynomial $G \in \cG$ such that $G(\xx) = 0$. We shall prove that this is possible with $k$ satisfying
\begin{equation}
\label{eq:k1}
k \ll_{n,d,\delta} B_0^{m\sigma_0/f} \dotsb B_n^{m\sigma_n/f},
\end{equation}
where $\sigma_i$ and $f$ are as defined in \eqref{eq:sigma_i} and \eqref{eq:f}.
Multiplying these forms together, we get a form of degree $\delta k$, vanishing at each point of $S_{0,\phi}(X,\bB)$ but not belonging to $I$, and whose irreducible factors all have degree at most $\delta$.
Having proven this, we shall then choose $\delta$ as to give the desired estimate.

To prove the estimate \eqref{eq:k1}, we consider a covering of $[-1,1]^m$ by cubes $\Sigma$ of sidelength $\rho$, where the value of $\rho$ is to be chosen appropriately later on in the proof. For a certain fixed cube $\Sigma$, let $\xx^{(1)},\dotsc,\xx^{(q)}$ be an enumeration of the (finitely many) points in $S_{0,\phi}(X,\bB)$ such that $\bar\tau([\xx]) \in \iota_0(\phi(\Sigma))$. We shall then find a polynomial $G$, with integral coefficients and defined in $M(\delta)$, such that
 \begin{equation}
\label{eq:AuxPoly2}
G(\xx^{(1)}) = \dotsm = G(\xx^{(q)}) = 0.
\end{equation}
We shall achieve this by bounding the rank of the matrix
\begin{equation*}
\mathcal A = \left( (\xx^{(j)})^\ee \right)_{\substack{\ee \in M(\delta) \\ j=1,\dotsc,q}},
\end{equation*}
with rows corresponding to exponent $(n+1)$-tuples $\ee$, ordered according to the chosen graded monomial order $<$, and columns corresponding to the different points $\xx^{(j)}$. We will prove that $\mathcal A$ has rank at most $\mu-1$, thus producing a polynomial satisfying \eqref{eq:AuxPoly2}, as in \cite[Lemma 1]{Bombieri-Pila}. 

If $q \leq \mu - 1$, then of course $\rank \mathcal A \leq \mu - 1$. Assume therefore that $q \geq \mu$, and pick $\mu$ points among the $\xx^{(j)}$. Renumber the points as $\xx^{(1)},\dotsc,\xx^{(\mu)}$. We shall now examine the determinant
\[
\cD = \det \left( (\xx^{(j)})^\ee \right)_{\substack{\ee \in M(\delta) \\ j=1,\dotsc,\mu}}.
\]
Let $\yy^{(j)} = \tau(\xx^{(j)}) \in [-1,1]^{n+1}$. Then we have
\[
\cD = B_0^{\sigma_0} \dotsb B_n^{\sigma_n} \cD', \text{ where } \cD' = \det \left( (\yy^{(j)})^\ee \right)_{\substack{\ee \in M(\delta) \\ j=1,\dotsc,\mu}}.
\]
Writing $\zz^{(j)} = (y^{(j)}_1/y^{(j)}_0,\dotsc,y^{(j)}_n/y^{(j)}_0)$, we have $\zz^{(j)} \in \phi(\Sigma)$ by assumption. Furthermore, we have
\[
\cD' = \left(y^{(1)}_0 \dotsb y^{(\mu)}_0 \right)^\delta \Delta, \text{ where } \Delta = \det \left( (1,\zz^{(j)})^\ee \right)_{\substack{\ee \in M(\delta) \\ j=1,\dotsc,\mu}},
\]
so $|\cD'| \leq |\Delta|$. To estimate $\Delta$, we can now use Lemma \ref{lem:GeneralDeterminant}. Enumerating the elements $\ee(1),\dotsc,\ee(\mu)$ of $M(\delta)$ as specified by $<$, we let the functions $\psi_i:\Sigma \to \RR$ in the hypothesis of the lemma be given by
\[
\psi_i(\uu) = (1,\phi(\uu))^{\ee(i)}.
\]
Using Proposition \ref{prop:monomialnorm}, together with the fact that $\Vert \phi \Vert_\nu \leq 1$, we see that $\Vert \psi_i \Vert \ll_{n,d,\delta} 1$. Thus we get $|\Delta| \ll_{n,d,\delta} \rho^f$, so that 
\[
|\cD| \ll_{n,d,\delta} B_0^{\sigma_0} \dotsb B_n^{\sigma_n} \rho^f.
\]

Now we make use of the crucial fact that the entries in $\cD$ are all integers, from which follows that either $\cD=0$ or $|\cD| \geq 1$. We see that if we choose
\[
B_0^{-\sigma_0/f} \dotsb B_n^{-\sigma_n/f} \ll_{n,d,\delta} \rho \ll_{n,d,\delta} B_0^{-\sigma_0/f} \dotsb B_n^{-\sigma_n/f},
\]
we will get $|\cD|<1$, and consequently $\cD=0$. Thus, for each small cube $\Sigma$ we produce a polynomial $G$, defined in $M(\delta)$, with the desired property. To cover $[-1,1]^m$ we do not need more than 
\[
O_{n,d,\delta}(B_0^{m\sigma_0/f} \dotsb B_n^{m\sigma_n/f})
\]
of the cubes $\Sigma$, so the estimate \eqref{eq:k1} follows.

We have now proven that there exists a form $G \notin I$ of degree $k$ vanishing at every point in $S(X,\bB)$, where $k$ satisfies \eqref{eq:k1}. Furthermore, the irreducible factors of $G$ have degree at most $\delta$. It remains to choose the value of the parameter $\delta$. By Proposition \ref{prop:a_i}, we have
\[
\sigma_i = a_i \delta \HF(\delta) + O_{n,d}(\delta^m) = \frac{a_i d}{m!} \delta^{m+1} + O_{n,d}(\delta^m).
\]
as $\delta \to \infty$. Furthermore, our choice \eqref{eq:choose_nu2} implies that 
\[
\nu = d^{1/m} \delta + O_{n,d}(1),
\]
so we have
\begin{align*}  
f &= \sum_{i=0}^\nu i L_{m}(i) + O_{m}(\nu^{m}) = \frac{\nu^{m+1}}{(m+1)(m-1)!} + O_m(\nu^{m}) \\
  &= \frac{d^{(m+1)/m}}{(m+1)(m-1)!}\,\delta^{m+1} + O_{n,d}(\delta^{m}). 
\end{align*}
This yields
\[
\frac{m\sigma_i}{f} = \frac{(m+1)a_i}{d^{1/m}} + O_{n,d}(\delta^{-1}).
\]
In particular, we can choose $\delta$ depending only on $n$, $d$ and $\epsi$, such that
\[
\frac{m\sigma_i}{f} \leq \frac{(m+1)a_i}{d^{1/m}} + \epsi. 
\]
This concludes the proof of Theorem \ref{thm:auxiliary}.


\section{Integral points on affine varieties}

In this section we prove Theorem \ref{thm:affine}. Let $X \subset \AA^n_\RR$ be an irreducible closed subvariety of dimension $m$ and degree $d$, and let $I = I(X)\subset \RR[x_1,\dotsc,x_n]$. Let $\overline{X} \subset \PP^n_\RR$ be the projective closure of $X$ \cite[\S 8.4]{CLO}. Then the homogeneous ideal of $\overline{X}$ is the homogenization $I^h$ of $I$. Furthermore, let $\bB = (1,B,\dotsc,B) \in \RR^{n+1}$. If $(x_1,\dotsc,x_n) \in X(\ZZ,B)$, then $(1,x_1,\dotsc,x_n) \in S(\overline{X}, \bB)$. 

By Theorem \ref{thm:auxiliary}, given a graded monomial ordering $<$, there exists a homogeneous polynomial $G \in \ZZ[x_0,\dotsc,x_n] \setminus I^h$ of degree
\[
k \ll_{n,d,\epsi} B^{(a_1+\dotsb+a_n)(m+1)d^{-1/m}+ \epsi}
\]
such that $G(\xx) = 0$ for all $\xx \in S(\overline{X},\bB)$. 

Put
\[
g(x_1,\dotsc,x_n) = G(1,x_1,\dotsc,x_n).
\]
Then $g \notin I$ since $G \notin I^h$, and we have $g(\xx) = 0$ for every $\xx \in X(\ZZ,B)$. To complete the proof of Theorem \ref{thm:affine}, we only have to prove that $<$ can be chosen in such a way that
\begin{equation}
\label{eq:a_i}
a_1 + \dotsb + a_n \leq \frac{m}{m+1}.
\end{equation}

We shall follow the proof of \cite[Lemma 1.12]{Salberger07}, defining the graded monomial ordering by letting
$\xx^\alpha < \xx^\beta$ if and only if $|\alpha| < |\beta|$  or $|\alpha| = |\beta|$ and the left-most entry of $\alpha - \beta$ is positive. 
We need to examine, for $s \in \ZZ_{\geq 0}$, the quantity
\[
\sigma_1(s) + \dotsc + \sigma_n(s) = \sum_{\substack{\alpha \in \Lambda_{n+1}(s)\\ \xx^\alpha \notin \LT(I^h)}} (\alpha_1 + \dotsc + \alpha_n).
\]
Suppose that
\[
\{\xx^\alpha; \alpha \in \Lambda_{n+1}(s), \xx^\alpha \notin \LT(I^h) \} = \{M_1,\dotsc,M_{r}\},
\]
where $r = \HF_{I^h}(s)$. Put 
\[
m_i(x_1,\dotsc,x_n) = M_i(1,x_1,\dotsc,x_n) \text{ for } i \in \{1,\dotsc,r\}.
\]
Then we have
\begin{equation}
\label{eq:sigma=deg}
\sigma_1(s) + \dotsc + \sigma_n(s) = \deg m_1 + \dotsc +\deg m_r.
\end{equation}

Now, let $J \subset \RR[x_0,\dotsc,x_n]$ be the homogeneous ideal generated by $I^h$ and $x_0$. We claim that
\begin{equation}
\label{eq:m_i}
m_i \notin \LT(J), \text{ for } i=1,\dotsc,r.
\end{equation}
Indeed, if $m_i$ were the leading monomial of $F = x_0 G + H$, where $H \in I^h$, then $M_i = x_0^{s-\deg m_i} m_i$ would be the leading monomial of $x_0^{s-\deg m_i} H \in I^h$.

Since the $m_i$ are distinct, \eqref{eq:sigma=deg} and \eqref{eq:m_i} imply that
\[
\sigma_1(s) + \dotsc + \sigma_n(s) \leq \sum_{t=1}^s t \HF_J(t). 
\]
For the variety $X_0 = \overline X \cap H_0$ defined by $J$ we have
\[
\deg X_0 \leq \deg X, \quad \dim X_0 \leq m-1,
\] 
so
\[
\HF_J(t) \leq \frac{d}{(m-1)!} t^{m-1} + O_{n,d}(t^{m-2}),
\]
whence
\[
\sum_{t=1}^s t \HF_J(t) \leq \frac{d}{(m+1)(m-1)!} s^{m+1} + O_{n,d}(s^m).
\]
Thus we get
\[
\frac{\sigma_1(s) + \dotsc + \sigma_n(s)}{s \HF_{I^h}(s)} \leq \frac{m}{m+1} + O_{n,d}(s^{-1}).
\]
By Proposition \ref{prop:a_i} we conclude that \eqref{eq:a_i} holds. This completes the proof of Theorem \ref{thm:affine}.

\bibliographystyle{plain}
\bibliography{ratpoints}

\begin{thebibliography}{10}

\bibitem{Bochnak-Coste-Roy}
J.~Bochnak, M.~Coste, and M.-F. Roy.
\newblock {\em G\'eom\'etrie alg\'ebrique r\'eelle}, volume~12 of {\em
  Ergebnisse der Mathematik und ihrer Grenzgebiete (3) [Results in Mathematics
  and Related Areas (3)]}.
\newblock Springer-Verlag, Berlin, 1987.

\bibitem{Bombieri-Pila}
E.~Bombieri and J.~Pila.
\newblock The number of integral points on arcs and ovals.
\newblock {\em Duke Math. J.}, 59(2):337--357, 1989.

\bibitem{Broberg02}
Niklas Broberg.
\newblock A note on a paper by {R}. {H}eath-{B}rown: ``{T}he density of
  rational points on curves and surfaces''.
\newblock {\em J. Reine Angew. Math.}, 571:159--178, 2004.

\bibitem{Burguet}
David Burguet.
\newblock A proof of {Y}omdin-{G}romov's algebraic lemma.
\newblock {\em Israel J. Math.}, 168:291--316, 2008.

\bibitem{CLO}
David Cox, John Little, and Donal O'Shea.
\newblock {\em Ideals, varieties, and algorithms}.
\newblock Undergraduate Texts in Mathematics. Springer-Verlag, New York, 1992.
\newblock An introduction to computational algebraic geometry and commutative
  algebra.

\bibitem{Gromov}
M.~Gromov.
\newblock Entropy, homology and semialgebraic geometry.
\newblock {\em Ast\'erisque}, (145-146):5, 225--240, 1987.
\newblock S{\'e}minaire Bourbaki, Vol. 1985/86.

\bibitem{Heath-Brown02}
D.~R. Heath-Brown.
\newblock The density of rational points on curves and surfaces.
\newblock {\em Ann. of Math. (2)}, 155(2):553--595, 2002.

\bibitem{Pila-Wilkie}
J.~Pila and A.~J. Wilkie.
\newblock The rational points of a definable set.
\newblock {\em Duke Math. J.}, 133(3):591--616, 2006.

\bibitem{Pila04}
Jonathan Pila.
\newblock Integer points on the dilation of a subanalytic surface.
\newblock {\em Q. J. Math.}, 55(2):207--223, 2004.

\bibitem{Salberger07}
Per Salberger.
\newblock On the density of rational and integral points on algebraic
  varieties.
\newblock {\em J. Reine Angew. Math.}, 606:123--147, 2007.

\bibitem{Yomdin87A}
Y.~Yomdin.
\newblock {$C^k$}-resolution of semialgebraic mappings. {A}ddendum to:
  ``{V}olume growth and entropy''.
\newblock {\em Israel J. Math.}, 57(3):301--317, 1987.

\end{thebibliography}

\end{document}